\documentclass[12pt, reqno]{amsart}

\usepackage{amsmath, amsthm, amssymb, amsfonts, mathrsfs}
\usepackage{geometry}
\usepackage{hyperref}
\usepackage{color}
\usepackage{enumerate}
\usepackage{setspace}

\newtheorem{theorem}{Theorem}[section]
\newtheorem{lemma}[theorem]{Lemma}
\newtheorem{proposition}[theorem]{Proposition}
\newtheorem{corollary}[theorem]{Corollary}

\newtheorem{conjecture}[theorem]{Conjecture}
\theoremstyle{definition}

\newcommand{\eps}{\varepsilon}
\newcommand{\tmix}{t_{\mathrm{mix}}}
\newcommand{\E}{\mathbb{E}}
\newcommand{\p}{\mathbb{P}}

\newcommand{\var}{\mathrm{Var}}
\newcommand{\vol}{\mathrm{vol}}

\title[Mixing and Cutoff on Sparse Heavy-Tailed RIGs]{Peripheral Traps and Lower Bounds on Mixing Times for Random Walks on Sparse Heavy-Tailed Random Intersection Graphs}

\author[V. Koval]{Vyacheslav Koval}
\address{Bernoulli Institute, University of Groningen, The Netherlands}
\email{v.v.koval@rug.nl}

\subjclass[2020]{Primary 05C80, 60G50; Secondary 05C81}
\keywords{Random intersection graphs, heavy-tailed distributions, mixing time, cutoff phenomenon, random walk in random environment.}

\date{\today}

\begin{document}

\begin{abstract}

This paper analyzes mixing time lower bounds for random walks on sparse, heavy-tailed Random Intersection Graphs. In sparse feature regimes, heavy-tailed feature distributions lead to the formation of peripheral trap -- chains of overlapping low-weight feature cliques attached to high-weight hub nodes within the graph's giant component.

By modeling escape trajectories from these traps as continuous limit hitting times for reflected Brownian motion, the analysis demonstrates that random walks experience logarithmic squared delays. Consequently, the mixing time is bounded below by $\Omega(\log^2 n)$, and the local total variation distance exhibits non-concentrated decay, formally preventing a sharp cutoff phenomenon.

\end{abstract}

\maketitle

\section{Introduction and Formal Results}

\subsection{Literature Review and Background}
Random Intersection Graphs (RIGs) form a versatile class of random networks where edges are established through shared attributes or group memberships. In the standard binomial RIG model $\mathcal{G}(n, m, p)$, $n$ vertices select subsets from $m$ features independently with probability $p$. Rybarczyk \cite{rybarczyk2011rig} and Blackburn and Gerke \cite{blackburn2012rig} established sharp connectivity thresholds and the emergence of the giant component $\mathcal{C}_{\max}$. Bloznelis \cite{bloznelis2013rig} analyzed clustering coefficients and degree distributions in sparse regimes.

The convergence rate of Markov chains to equilibrium is often characterized by the \emph{cutoff phenomenon} -- an abrupt transition where total variation distance drops from near $1$ to near $0$ over an asymptotically negligible window. First identified by Aldous and Diaconis \cite{aldous1986shuffling}, cutoff has been established for random regular graphs \cite{lubetzky2010cutoff} and sparse Erd\H{o}s--R\'enyi graphs. 

However, real-world networks exhibit extreme degree heterogeneity, which fundamentally alters random walk dynamics. In this paper, we study inhomogeneous RIGs with heavy-tailed feature weights. We prove that in the sparse feature regime ($m = \lfloor c n \rfloor$), the geometry creates peripheral traps inside $\mathcal{C}_{\max}$ that impose an $\Omega(\log^2 n)$ lower bound on mixing times and prevent sharp step-function concentration for the local total variation profile.

\subsection{Related Work and Novelty}
The physical mechanism of logarithmic squared delays -- arising from random walks traversing long, unbranched path geometries attached to expanding cores -- is well-documented for sparse random graphs. Notably, Fountoulakis and Reed \cite{fountoulakis2008mixing} established a $\Theta(\log^2 n)$ mixing time for the giant component of supercritical Erd\H{o}s--R\'enyi graphs driven by such bottleneck paths.

The primary technical novelty of this work lies in proving that this geometric obstruction emerges persistently in the fundamentally different, bipartite feature-overlap structure of Random Intersection Graphs. We establish that when feature weights follow a strict heavy-tailed distribution with exponent $\beta \in (2,3)$ and $\zeta < \frac{3-\beta}{2}$, the intersection topology enforces these bottlenecks despite the presence of highly dense overlapping sub-cliques.

\subsection{Operational Implications}
Operationally, Random Intersection Graphs serve as foundational models for feature-overlap networks, such as social affiliations, biological protein complexes, and recommendation bipartite graphs. The $\Omega(\log^2 n)$ bottleneck poses a severe physical limit on Markov Chain Monte Carlo (MCMC) sampling efficiency. Algorithms traversing such empirical networks (e.g., for community detection or motif sampling) cannot rapidly thermalize due to the pervasive existence of peripheral traps, making the standard $\Theta(\log n)$ heuristics highly optimistic.

\subsection{Formal Model Setup}
Let $G(n, m, \mathbf{w})$ denote the inhomogeneous Random Intersection Graph:
\begin{enumerate}
    \item Let $V_n = \{1, \dots, n\}$ be the vertex set, and $F_m = \{1, \dots, m\}$ be the feature set, where $m = \lfloor c n \rfloor$ for a constant $c > 0$ (the sparse feature regime).
    \item Each feature $j \in F_m$ has an independent weight $w_j \sim \mathrm{Pareto}(\beta)$ with exponent $\beta \in (2,3)$:
    \begin{equation} \label{eq:pareto_weights}
        \p(w_j > x) = x^{-\beta}, \quad x \ge 1.
    \end{equation}
    \item Vertex $i \in V_n$ selects feature $j \in F_m$ independently with probability $p_{ij}$:
    \begin{equation} \label{eq:attachment_prob}
        p_{ij} = \min\left(1, \frac{w_j}{n \sqrt{c}}\right).
    \end{equation}
    Let $I_{ij} \in \{0, 1\}$ indicate selection. An edge $(u,v) \in E_n$ exists if $\sum_{j=1}^m I_{uj} I_{vj} \ge 1$.
\end{enumerate}

\begin{proposition}[Giant Component Existence and Properties]\label{prop:giant_component}
Under Assumptions \eqref{eq:pareto_weights}--\eqref{eq:attachment_prob}, there exists a unique giant connected component $\mathcal{C}_{\max} \subset V_n$ containing $|\mathcal{C}_{\max}| = \gamma_0 n (1+o(1))$ vertices a.a.s.\ for a constant $\gamma_0 \in (0,1)$. The lazy simple random walk on $\mathcal{C}_{\max}$ with transition matrix $P = \frac{1}{2}I + \frac{1}{2}D^{-1}A$ is irreducible, aperiodic, and reversible with stationary distribution $\pi_n(v) = \frac{d_v}{2|E(\mathcal{C}_{\max})|}$.
\end{proposition}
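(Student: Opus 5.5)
The giant-component part reduces to a branching-process comparison via the bipartite vertex–feature representation; the random-walk properties are standard. Regard $G(n,m,\mathbf{w})$ as the projection of the bipartite graph $B_n$ on $V_n \cup F_m$ in which $i \sim j$ iff $I_{ij}=1$. Connected components of $G$ restricted to vertices of degree $\ge 1$ correspond exactly to components of $B_n$ (ignoring isolated features). So it suffices to show that $B_n$ has a unique giant component containing $\gamma_0 n(1+o(1))$ vertices of $V_n$.

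\textbf{Local weak limit of $B_n$.} For a uniform vertex $i$, the number of features it selects is approximately $\mathrm{Poisson}(\lambda_V)$ with
\[
\lambda_V = \frac{m\,\E[w]}{n\sqrt{c}} = \sqrt{c}\,\E[w],
\]
which is finite since $\beta>2$. The truncation in \eqref{eq:attachment_prob} is irrelevant because $\max_j w_j = O_P(n^{1/\beta}) = o(n)$. A feature reached through an edge has size-biased weight $\hat w$, with density proportional to $w\,\beta w^{-\beta-1}$. Given $\hat w$, it has $\mathrm{Poisson}(\hat w/\sqrt{c})$ further vertex-children. Each such vertex again has $\mathrm{Poisson}(\lambda_V)$ feature-children.

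The exploration process therefore converges locally to a two-type Galton--Watson tree. Its mean offspring over two generations is
\[
\nu = \lambda_V \cdot \frac{\E[w^2]}{\E[w]\sqrt{c}} = \E[w^2],
\]
which is finite since $\beta>2$. I would prove this coupling by a standard breadth-first exploration, valid up to $o(\sqrt n)$ explored elements with high probability, controlling collisions with the first-moment bounds $\E[w]<\infty$ and $\E[w^2]<\infty$.

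\textbf{Giant component and uniqueness.} Under Pareto weights with $x\ge1$ we get $\nu = \E[w^2] = \beta/(\beta-2) > 1$. The tree is therefore supercritical, and I would set $\gamma_0 = 1-q$, where $q$ is the extinction probability seen from a vertex. This gives $\gamma_0\in(0,1)$, since $q \ge \p(\mathrm{Poisson}(\lambda_V)=0) > 0$. I would then invoke the general theory of inhomogeneous random graphs (Bollob\'as--Janson--Riordan, whose kernel here has finite second moment) or Bloznelis's results for RIGs \cite{bloznelis2013rig}. These give $|\mathcal C_{\max}|/n \to \gamma_0$ in probability and that the second-largest component has size $o(n)$.

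If I do this by hand instead, the upper bound comes from a second-moment count of vertices whose exploration survives $k_n\to\infty$ steps. The lower bound comes from a sprinkling argument: split the features into two independent halves, and merge large clusters using the fact that two sets of size $\ge n^{2/3}$ share a feature with high probability. This sprinkling/uniqueness step is the main obstacle, because the dependence introduced by shared heavy features must be handled. Conditioning on the weights $\mathbf{w}$ (whose empirical moments converge by the law of large numbers) makes the edges of $B_n$ independent, and that reduction is the key simplification.

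\textbf{Random-walk claims.} These are deterministic on the event that $\mathcal{C}_{\max}$ is connected with at least two vertices.
\begin{enumerate}
\item Irreducibility follows from connectivity.
\item Aperiodicity follows from the holding probability $P(v,v)=1/2$.
\item For reversibility, check detailed balance:
\[
\pi_n(u)P(u,v) = \frac{d_u}{2|E|}\cdot\frac{1}{2d_u} = \frac{1}{4|E|} = \pi_n(v)P(v,u)
\]
for adjacent $u \ne v$, since $A$ is symmetric. Summing over $u$ then shows that $\pi_n$ is stationary.
\end{enumerate}
Uniqueness of the stationary distribution follows from irreducibility.
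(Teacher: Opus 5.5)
Your proposal is correct and follows the same route as the paper. In both, the exploration converges locally to a two-type Galton--Watson tree, supercriticality comes from $\E[w^2]=\beta/(\beta-2)>1$, the existence, size and uniqueness of the giant are taken from Bloznelis (or, in your version, Bollob\'as--Janson--Riordan applied to the bipartite graph, which has independent edges given $\mathbf{w}$), and the walk properties follow from connectivity and laziness. Your computation that the two-generation mean is exactly $\lambda_V\cdot\E[w^2]/(\E[w]\sqrt{c})=\E[w^2]$, with $c$ cancelling, supplies the step the paper only gestures at: its claim that the branching factor is ``proportional to the second moment'' does not by itself show it exceeds $1$.
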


\begin{proof}
In sparse inhomogeneous Random Intersection Graphs $G(n, m, \mathbf{w})$ with $m = \lfloor c n \rfloor$, local neighborhoods converge weak-locally to a two-type Galton--Watson branching process. As established by Bloznelis \cite{bloznelis2013rig} (Theorem 1) and Rybarczyk \cite{rybarczyk2011rig}, the survival probability of this branching process is strictly positive whenever the mean offspring matrix has a spectral radius exceeding 1. For a Pareto distribution with exponent $\beta \in (2,3)$, the second moment is strictly finite: $\E[w_j^2] = \beta/(\beta-2)$. Because $\beta \in (2,3)$, we have $\beta - 2 < 1$, and thus $\E[w_j^2] > \beta > 2 > 1$. The branching factor, being proportional to this second moment, is strictly greater than 1, forcing a positive survival probability. Thus, the giant component exists and contains a linear fraction $\gamma_0 n$ of all vertices a.a.s. Irreducibility and aperiodicity on $\mathcal{C}_{\max}$ follow directly from connectivity and the lazy transition matrix ($P(u,u) = 1/2$).
\end{proof}

The worst-case total variation distance at time $t$ on $\mathcal{C}_{\max}$ is:
\begin{equation} \label{eq:tv_distance}
    d(t) = \max_{x \in \mathcal{C}_{\max}} \| P^t(x, \cdot) - \pi_n \|_{\mathrm{TV}} = \max_{x \in \mathcal{C}_{\max}} \frac{1}{2} \sum_{y \in \mathcal{C}_{\max}} |P^t(x, y) - \pi_n(y)|,
\end{equation}
and $\tmix(\eps) = \min \{ t \ge 0 : d(t) \le \eps \}$.

\subsection{Main Theorems}

\begin{theorem}[Mixing Time Lower Bound]\label{thm:lower_bound}
For $G(n, m, \mathbf{w})$ with $m = \lfloor c n \rfloor$ and $\beta \in (2,3)$, asymptotically almost surely (a.a.s.), there exists a peripheral trap $S \subset \mathcal{C}_{\max}$ of depth $k(n) = \Theta(\log n)$ such that the global mixing time satisfies:
\begin{equation} \label{eq:theorem_lb}
    \tmix(1/4) = \Omega\left(k(n)^2\right) = \Omega\left(\log^2 n\right).
\end{equation}
\end{theorem}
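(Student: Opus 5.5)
The plan follows the Fountoulakis--Reed strategy: find a long bare path hanging off the giant component and show that a walk started at its far end needs order $k^2$ steps to leave it, while the path carries only a negligible share of $\pi_n$.

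\textbf{Step 1: constructing the trap.} We look for a chain of $k=\lfloor \alpha\log n\rfloor$ features $j_1,\dots,j_k$ with the following properties.
\begin{itemize}
\item Each $j_i$ has weight $w_{j_i}=O(1)$ and is selected by exactly two vertices.
\item Consecutive features share one vertex, so the vertices $v_0,\dots,v_k$ form an induced path.
\item The internal vertices $v_1,\dots,v_{k-1}$ select no other features.
\item $v_0$ attaches to $\mathcal{C}_{\max}$.
\item $v_k$ is a leaf.
\end{itemize}
For a given candidate this configuration has probability at least $\rho^{k}$ for some constant $\rho\in(0,1)$. The reason is that the relevant quantities are Poisson-type with bounded means once $w_j=O(1)$, which happens with probability bounded below under \eqref{eq:pareto_weights}.

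We then run a second-moment count, or an exploration process that reveals disjoint candidate starting points, with $\alpha<1/\log(1/\rho)$. The expected number of such chains is $n\rho^{k}\to\infty$, and the correlations are small because distinct chains use disjoint features with high probability. Attachment of $v_0$ to $\mathcal{C}_{\max}$ has probability bounded below by Proposition~\ref{prop:giant_component}, through the $\gamma_0$ survival probability of the local branching limit. Set $S=\{v_1,\dots,v_k\}$.

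\textbf{Step 2: escape-time estimate.} Start the lazy walk at $x=v_k$. Inside $S$ it is exactly a lazy simple random walk on a path, reflected at $v_k$. The exit time $\tau$ from $S$ through $v_0$ is therefore the hitting time of $0$ for lazy SRW on $\{0,\dots,k\}$ reflected at $k$. By Donsker's invariance principle, $\tau/k^2$ converges in law to the hitting time of $0$ by reflected Brownian motion started at $1$, up to a factor $2$ from laziness. That limit has a continuous law with no atom at $0$.

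The same conclusion can be reached more directly. By Doob's maximal inequality, or the reflection principle, $\p(\tau\le \delta k^2)\le \p(\max_{s\le \delta k^2}|X_s-X_0|\ge k)\le 2e^{-1/(4\delta)}$, using Azuma on the martingale part. Choosing $\delta$ small makes this at most $1/4$.

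\textbf{Step 3: converting to total variation.} At $t=\delta k^2$,
\[
d(t)\ge P^t(x,S)-\pi_n(S)\ge \tfrac34-\pi_n(S).
\]
All vertices of $S$ have degree at most $2$. Also $2|E(\mathcal{C}_{\max})|\ge |\mathcal{C}_{\max}|-1=\Omega(n)$ by Proposition~\ref{prop:giant_component}. Hence $\pi_n(S)=O(\log n/n)=o(1)$, so $d(\delta k^2)>1/4$ for large $n$ and $\tmix(1/4)\ge \delta k^2=\Omega(\log^2 n)$.

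\textbf{Main obstacle.} The hard part is Step 1: proving the existence of the chain a.a.s.\ with $k=\Theta(\log n)$, and in particular that its base $v_0$ lies in $\mathcal{C}_{\max}$ while the chain is not itself absorbed into the giant component in a way that creates shortcuts. The chain must be induced, with no extra features among path vertices. My first attempt would be to expose the bipartite graph through a breadth-first exploration from uniformly random vertices. I would control dependence by conditioning on the heavy features, meaning those with $w_j>\log n$, which are few, and use the fact that light features are nearly independent with Poisson$(O(1))$ memberships.

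The second-moment computation must also handle the heavy tail with $\beta\in(2,3)$. The mean degree is finite, so the per-step probabilities stay bounded away from $0$. For the upper bound on $\pi_n(S)$ it suffices that $|E(\mathcal{C}_{\max})|\ge |\mathcal{C}_{\max}|-1$, so the possibly infinite variance of degrees causes no issue.
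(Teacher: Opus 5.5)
Your proposal is correct in outline. It follows the same Fountoulakis--Reed skeleton as the paper: a bare path of length $\Theta(\log n)$, an escape time of order $k^2$, and negligible stationary mass on the trap. Two ingredients are handled differently.

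\textbf{Anchoring.} The paper roots the chain at a vertex that selects a heavy feature of weight at least $n^\zeta$, and asserts that this feature's $\Theta(n^\zeta)$-clique meets $\mathcal{C}_{\max}$ a.a.s. It pays for this with a factor $n^{(1-\beta)\zeta}$ in the first moment (Lemma~\ref{lem:expected_traps}), so it has to restrict $\zeta$ and tune $M$ in the feasibility condition of the Janson step. You anchor $v_0$ through ordinary branching-process survival. Your expected count is then simply $n\rho^k$, with no hub and no $\zeta$. The price is that you must show $\p(v_0\in\mathcal{C}_{\max}\mid\text{chain})$ is bounded below, which is a survival-implies-giant argument. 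The hub trick softens that step, since with $n^\zeta$ clique members one of them survives a.a.s., but it does not remove it.

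\textbf{Escape estimate.} The paper proves the full limit $T_{\mathrm{esc}}/k^2\to\tau_{\mathrm{BM}}$ via Donsker, continuous mapping and Slutsky. It also needs the decomposition $T_{\mathrm{esc}}=T_{v_1}+\mathcal{O}_{\p}(k)$, because its trap contains the hub-attached root. You exclude $v_0$ from $S$, so exit equals hitting $v_0$. A single maximal inequality then gives the non-asymptotic bound $\p(\tau\le\delta k^2)\le 1/4$, which is all Theorem~\ref{thm:lower_bound} needs. Excluding $v_0$ also makes $\pi_n(S)=O(\log n/n)$ immediate from the degree bound, whereas the paper must absorb the root's degree $\Theta(n^\zeta)$. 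What the Donsker route buys is the whole limiting profile, which the paper uses for Corollary~\ref{cor:local_non_concentration}.

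\textbf{A technical point on Azuma.} Avoid applying Azuma to the martingale part of the Doob decomposition of the reflected walk. Its drift at the reflecting end $v_k$ points toward the exit, so that decomposition does not directly bound the displacement. The clean justification is the exact folding coupling $X_s=k-|Y_s-k|$, where $Y$ is a lazy simple random walk on $\mathbb{Z}$ started at $k$. This coupling reproduces the stay/move probabilities $1/2,1/2$ at the leaf. Under it, $\tau\le t$ forces $\max_{s\le t}|Y_s-Y_0|\ge k$, and Azuma applies to $Y$ directly.
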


\begin{corollary}[Local Non-Concentration of Total Variation Profile]\label{cor:local_non_concentration}
Let $x \in S \subset \mathcal{C}_{\max}$ be the deepest terminal vertex of the peripheral trap established in Theorem \ref{thm:lower_bound}. Define $t_x(\eps) = \min\{t \ge 0 : d_x(t) \le \eps\}$. As $n \to \infty$, the total variation decay profile starting from $x$ lacks sharp step-function concentration:
\begin{equation} \label{eq:theorem_non_concentration}
    t_x(\eps) - t_x(1-\eps) \ge C_\eps k(n)^2 = \Omega\left(\log^2 n\right),
\end{equation}
for a universal constant $C_\eps > 0$ dependent on $\eps \in (0, 1/2)$, but independent of $n$.
\end{corollary}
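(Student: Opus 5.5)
We need to show that, starting from the deepest point $x$ of the trap, the distance $d_x(t)$ takes a time of order $k(n)^2$ to fall from $1-\eps$ to $\eps$. We exploit the fact that the escape time from the trap, rescaled by $k(n)^2$, converges to a continuous random variable with a diffuse law. Throughout we assume the trap $S$ of Theorem \ref{thm:lower_bound}. It is a chain of $k=k(n)=\Theta(\log n)$ overlapping low-weight feature cliques, of bounded size, attached at one end to a hub $h$ in the core of $\mathcal{C}_{\max}$. Let $\tau$ be the first time the lazy walk from $x$ hits $h$.

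\textbf{Step 1 (reduction to a birth--death chain).} Project the walk on $S$ onto the clique index $i\in\{0,\dots,k\}$ along the chain. By the bounded clique sizes and overlaps, this projection is, up to bounded holding times, a nearest-neighbour walk with uniformly elliptic transition probabilities and no drift in the bulk. Its stationary weights are comparable to uniform on $\{0,\dots,k\}$. Invariance principles for such chains show that $\tau/k^2$ converges in law to $c_*T$. Here $T$ is the hitting time of $1$ by reflected Brownian motion started at $0$ (the reflection sits at the deep end), and $c_*>0$ is a constant. The law of $T$ has a continuous, strictly increasing distribution function $F$ on $(0,\infty)$.

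\textbf{Step 2 (lower bound on TV in terms of $\tau$).} Since $\pi_n(S)=O(k\,\max_{v\in S} d_v/|E|)=o(1)$, we have
\begin{equation*}
d_x(t)\ \ge\ \p_x(X_t\in S)-\pi_n(S)\ \ge\ \p_x(\tau>t)-o(1).
\end{equation*}
Hence $d_x(t)\ge 1-F(t/(c_*k^2))-o(1)$.

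\textbf{Step 3 (upper bound on TV in terms of $\tau$).} We must show that after time $\tau$ the walk mixes in time $o(k^2)$. Concretely, we need
\begin{equation*}
\max_{s\ge \delta k^2}\|P^{s}(h,\cdot)-\pi_n\|_{TV}\to0
\end{equation*}
for every $\delta>0$. By the strong Markov property,
\begin{equation*}
d_x(t)\le \p_x(\tau>t-\delta k^2)+o(1)=1-F\bigl((t-\delta k^2)/(c_*k^2)\bigr)+o(1).
\end{equation*}
This is the \emph{main obstacle}: it needs control of the walk from the hub on the rest of $\mathcal{C}_{\max}$. Other traps of depth $\Theta(\log n)$ exist, and re-entering them costs order $k^2$ time too.

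We would handle this by picking $S$ to be essentially the deepest trap and all other traps shallower by a factor. Alternatively, we would bound the probability of entering a comparably deep trap before mixing. To do so, use the hub's huge degree (weight $w\ge n^{\zeta'}$): the chance of stepping into any given pendant chain from $h$ is $O(1/d_h)$. Combine this with a conductance bound of order $1/\log n$ on the core. The core then mixes in $O(\log n\cdot\mathrm{polylog}\log n)=o(k^2)$.

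If this route fails, there is a fallback. The non-concentration claim only needs the two bounds at fixed quantiles. Sandwich $d_x(t)$ between $1-F(t/(c_*k^2))-o(1)$ and $1-F(t/(C^*k^2))+o(1)$, using a cruder, possibly larger, constant $C^*$ for the post-escape phase.

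\textbf{Step 4 (conclusion).} From Step 3, $d_x(t)\le \eps$ can hold only after the escape has happened with probability at least $1-\eps$. So $t_x(\eps)\ge c_*k^2F^{-1}(1-\eps)-o(k^2)$. From Step 3 again, $t_x(1-\eps)\le c_*k^2F^{-1}(\eps)+\delta k^2+o(k^2)$.

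Since $F$ is strictly increasing, $F^{-1}(1-\eps)-F^{-1}(\eps)>0$ for $\eps<1/2$. Take $\delta$ to be half of $c_*\bigl(F^{-1}(1-\eps)-F^{-1}(\eps)\bigr)$. This gives \eqref{eq:theorem_non_concentration} with
\begin{equation*}
C_\eps=\tfrac12 c_*\bigl(F^{-1}(1-\eps)-F^{-1}(\eps)\bigr)>0,
\end{equation*}
which depends only on $\eps$ and the limiting diffusion, not on $n$.
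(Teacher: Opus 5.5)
\textbf{Gap in Step 3.} You are right that the window bound needs two-sided control of $d_x$, and this is exactly where your proposal (and in fact the paper) falls short. Your Steps 1--2 are the paper's entire argument: $d_x(t)\ge \p_x(T_{\mathrm{esc}}>t)-o(1)$ together with $T_{\mathrm{esc}}/k^2\to\tau_{\mathrm{BM}}$. That only gives \emph{lower} bounds on both $t_x(\eps)$ and $t_x(1-\eps)$. A lower bound on their difference also needs an \emph{upper} bound on $t_x(1-\eps)$. The paper reads the window off the lower bound as if it were sharp, so Step 3 cannot be borrowed from it.

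In your proposal Step 3 is only sketched, yet it carries the whole difficulty. You need $\max_{s\ge\delta k^2}\|P^s(h,\cdot)-\pi_n\|_{\mathrm{TV}}\to 0$ for every $\delta>0$, i.e.\ mixing on all of $\mathcal{C}_{\max}$ from the hub in $o(\log^2 n)$ steps. Weakening this via concavity of the overlap, $1-d_x(t)\ge \p_x(\tau\le t-s)\,\inf_{r\ge s}\bigl(1-\|P^r(h,\cdot)-\pi_n\|_{\mathrm{TV}}\bigr)$, does not help much. The hub-side distance must still drop below $(1-2\eps)/(1-\eps)$ within $o(k^2)$ steps, and this threshold tends to $0$ as $\eps\uparrow 1/2$. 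Nothing in the paper supplies such a bound; even the $\mathcal{O}(\log^2 n)$ global upper bound is left as a conjecture.

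\textbf{The routes you sketch do not close it.}
\begin{itemize}
\item A conductance bound of order $1/\log n$ gives, via Cheeger, only $t_{\mathrm{rel}}=\mathcal{O}(\log^2 n)$ and $\tmix=\mathcal{O}(\log^3 n)$, not $o(\log^2 n)$. You would need constant expansion of the kernel plus control of excursions into the attached periphery, which in a sparse giant carries a non-vanishing fraction of $\pi_n$. That is a Berestycki--Lubetzky--Peres--Sly-type theorem for this model.
\item Choosing $S$ so that all other traps are shallower by a factor is not available. The first-moment count $\Theta(n^{1+(1-\beta)\zeta}C_1^k)$ indicates polynomially many traps within any fixed factor of the maximal depth. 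What you would really need is that the walk started at the hub enters a trap of depth $\ge\alpha\log n$ with probability $o(1)$ before it mixes.
\item The fallback also fails. An additive post-escape delay $Ak^2$ gives a window of at least $\bigl(c_*(F^{-1}(1-\eps)-F^{-1}(\eps))-A\bigr)k^2$, which is negative unless $A$ is below the quantile spread. A rescaled upper profile $1-F(t/(C^*k^2))$ with $C^*>c_*$ gives $c_*F^{-1}(1-\eps)-C^*F^{-1}(\eps)$, which is negative for $\eps$ near $1/2$.
\end{itemize}
As written, your argument proves $t_x(\eps)\ge c_*F^{-1}(1-\eps)k^2(1-o(1))$. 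That suffices for the lower bound of Theorem~\ref{thm:lower_bound} but not for the Corollary.

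\textbf{Minor points.} The first inference in Step 4 comes from Step 2, not Step 3. Also, $C_\eps$ should be taken slightly below $\tfrac12c_*(F^{-1}(1-\eps)-F^{-1}(\eps))$ to absorb the $o(k^2)$ terms.
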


\begin{conjecture}[Absence of Global Cutoff]\label{conj:cutoff}
We conjecture that the global mixing time is bounded above by $\mathcal{O}(\log^2 n)$, which, combined with Corollary \ref{cor:local_non_concentration}, would formally preclude a global cutoff phenomenon. Establishing this matching global upper bound remains an open problem. The canonical-path and conductance arguments used locally for the trap do not trivially transfer to a global bound because routing canonical paths across the entire inhomogeneous feature-overlap structure introduces massive multi-commodity flow bottlenecks at the heavy hubs, obstructing a straightforward global reduction.
\end{conjecture}

\section{Peripheral Feature Overlaps and Quenched Trap Construction}

\begin{lemma}[Quenched Isolation Probability Concentration]\label{lem:quenched_isolation}
Let $\mathbf{w} = (w_1, \dots, w_m)$ be the vector of realized feature weights. Define the typical weight configuration set:
\begin{equation} \label{eq:typical_weights}
    \Omega_{\mathbf{w}} = \left\{ \mathbf{w} \in [1, \infty)^m : \left| \frac{1}{m} \sum_{j=1}^m w_j - \mu_w \right| \le \frac{1}{\log n} \right\} \cap \mathcal{E}_{\mathrm{counts}},
\end{equation}
where $\mathcal{E}_{\mathrm{counts}}$ denotes the event that the empirical subset counts strongly concentrate around their expectations ($|F_{\mathrm{low}}| = \Theta(n)$ and $|F_{\mathrm{hub}}| = \Theta(n^{1-\beta\zeta})$). Since $\E[w_j^2] < \infty$, the variance of the sample mean is $\mathcal{O}(1/m) = \mathcal{O}(1/n)$. By Chebyshev's inequality, the probability of a $1/\log n$ deviation is bounded by $\mathcal{O}(\log^2 n / n) = o(1)$. Combined with standard indicator concentration bounds for $\mathcal{E}_{\mathrm{counts}}$, we obtain $\p(\mathbf{w} \in \Omega_{\mathbf{w}}) = 1 - o(1)$. Furthermore, for any $\mathbf{w} \in \Omega_{\mathbf{w}}$, the quenched isolation probability for any vertex $v \in V_n$ selecting no features outside a fixed target set $F_0 \subset F_m$ with $|F_0| = o(m)$ satisfies:
\begin{equation} \label{eq:quenched_isolation_bound}
    \p(v \text{ selects no features in } F_m \setminus F_0 \mid \mathbf{w}) = \exp\left( -\sqrt{c} \mu_w (1 \pm o(1)) \right) = C_{\mathrm{iso}} \in (0,1).
\end{equation}
Crucially, because edge formation in RIGs relies on selecting distinct subsets of features, the event of forming required internal path edges (involving features $F_0 = \{f_1, \dots, f_k\}$) is strictly probabilistically independent from the "isolation" event (involving the disjoint set $F_m \setminus F_0$).
\end{lemma}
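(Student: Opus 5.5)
The plan has three parts: show that $\Omega_{\mathbf{w}}$ is typical, compute the quenched isolation probability as a product over features, and verify the independence claim.

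\textbf{Typicality of $\Omega_{\mathbf{w}}$.} First, $\mu_w = \E[w_j] = \beta/(\beta-1)$ and $\E[w_j^2]=\beta/(\beta-2)<\infty$ for $\beta\in(2,3)$. Chebyshev applied to the sample mean $\frac1m\sum_j w_j$ gives deviation probability at most $\var(w_1)\log^2 n/m = \mathcal{O}(\log^2 n/n)$.

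For $\mathcal{E}_{\mathrm{counts}}$, I take $F_{\mathrm{low}}=\{j: w_j\le K\}$ for a fixed $K$ and $F_{\mathrm{hub}}=\{j: w_j> n^{\zeta}\}$. Then $|F_{\mathrm{low}}|$ is $\mathrm{Bin}(m,1-K^{-\beta})$, and $|F_{\mathrm{hub}}|$ is $\mathrm{Bin}(m,n^{-\beta\zeta})$ with mean $\Theta(n^{1-\beta\zeta})\to\infty$, since $\zeta<(3-\beta)/2<1/\beta$. A Chernoff bound concentrates each within a factor $1\pm o(1)$ with probability $1-o(1)$. A union bound then gives $\p(\mathbf{w}\in\Omega_{\mathbf{w}})=1-o(1)$.

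\textbf{The quenched isolation probability.} Conditionally on $\mathbf{w}$ the indicators $I_{vj}$ are independent, so
\[
\p(v \text{ selects no feature in } F_m\setminus F_0\mid \mathbf{w})=\prod_{j\notin F_0}(1-p_{vj}).
\]
I take logarithms and split features by whether $w_j\le \eps n$.

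For the light features, $p_{vj}=w_j/(n\sqrt c)\le \eps/\sqrt c$, and $\log(1-x)=-x(1+\mathcal{O}(x))$. Summing gives $-\frac{1}{n\sqrt c}\sum_{j\notin F_0} w_j\,(1+\mathcal{O}(\max_j p_{vj}))$. The crude bound $\max_j w_j=\mathcal{O}_P(m^{1/\beta})=o(n)$ holds a.a.s. and can be added to $\Omega_{\mathbf{w}}$, so in fact every $p_{vj}=o(1)$.

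Using $\sum_{j}w_j = m\mu_w(1+o(1))$ and $m/(n\sqrt c)=\sqrt c\,(1+o(1))$, the full sum equals $\sqrt c\,\mu_w(1+o(1))$. This requires removing $F_0$ to cost only $o(m)$ weight, i.e.\ $\sum_{j\in F_0}w_j=o(n)$. This is where the hypothesis $|F_0|=o(m)$ is used.

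\textbf{Main obstacle.} The condition $|F_0|=o(m)$ alone does not control the weight of $F_0$ if $F_0$ contains hubs. I would handle it in one of two ways. The first is to restrict to $F_0\subset F_{\mathrm{low}}$, which is the case relevant to the trap construction, since trap features are low-weight; then $\sum_{F_0}w_j\le K|F_0|=o(n)$. The second is to bound $\sum_{F_0}w_j$ by the sum of the $|F_0|$ largest weights. By finite mean and uniform integrability, this is $o(m)$ whenever $|F_0|=o(m)$, and I would include that in $\Omega_{\mathbf{w}}$ via a truncation argument. Exponentiating gives $C_{\mathrm{iso}}=e^{-\sqrt c\mu_w}(1+o(1))\in(0,1)$.

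\textbf{Independence.} Conditionally on $\mathbf{w}$, the families $\{I_{uj}: j\in F_0\}$ and $\{I_{uj}:j\in F_m\setminus F_0\}$ (over all vertices $u$) are independent collections of Bernoulli variables. The internal path-edge event is measurable with respect to the first family and the isolation event with respect to the second, so they are independent given $\mathbf{w}$.
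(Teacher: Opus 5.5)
Your proposal is correct and follows essentially the same route as the paper: Chebyshev plus binomial concentration for the typicality of $\Omega_{\mathbf{w}}$, the factorization $\prod_{j\notin F_0}(1-p_{vj})$ with a logarithmic expansion, and independence from the disjointness of the feature-indicator families. Your two extra controls are points the paper's proof uses without stating them. The bound $\max_j w_j=o(n)$ is needed because its $\Omega_{\mathbf{w}}$ does not force $\max_j p_{vj}\to 0$. The requirement $\sum_{j\in F_0}w_j=o(n)$ is needed because $|F_0|=o(m)$ alone does not bound the weight of $F_0$. Your version is therefore the more careful one.
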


\begin{proof}
Since $\beta \in (2,3)$, $w_j$ has finite mean $\mu_w < \infty$. By the Weak Law of Large Numbers for i.i.d.\ heavy-tailed random variables, $\p(\mathbf{w} \in \Omega_{\mathbf{w}}) = 1 - o(1)$.
Conditioning on $\mathbf{w} \in \Omega_{\mathbf{w}}$, the quenched isolation probability factors over independent choices across the specific features in $F_m \setminus F_0$:
\begin{equation} \label{eq:isolation_product}
    \prod_{j \in F_m \setminus F_0} (1 - p_{vj}).
\end{equation}
Because the features evaluated here ($F_m \setminus F_0$) are strictly disjoint from the target path features ($F_0$), this probability contributes a pure independent multiplicative factor to any trap formation event involving $F_0$.
Taking the logarithm and bounding via $-x - x^2 \le \log(1-x) \le -x$:
\begin{equation} \label{eq:log_isolation}
    -\sum_{j \in F_m \setminus F_0} p_{vj} + \mathcal{O}\left( \sum_{j=1}^m p_{vj}^2 \right) = -\frac{1}{n\sqrt{c}} \sum_{j=1}^m w_j + o(1) = -\sqrt{c}\mu_w + o(1).
\end{equation}
Exponentiating yields $C_{\mathrm{iso}} \in (0,1)$.
\end{proof}

\begin{lemma}[Expected Trap Count \& Attachment to $\mathcal{C}_{\max}$]\label{lem:expected_traps}
Condition on $\mathbf{w} \in \Omega_{\mathbf{w}}$. Let $F_{\mathrm{low}} \subset F_m$ be the deterministic set of features with weights $w_j \in [1, 1+\eta]$, and $F_{\mathrm{hub}} \subset F_m$ be the set of heavy features with $w_j \ge n^{\zeta}$ for a constant $\zeta \in (0, \frac{3-\beta}{2})$. 
Let $X_k$ count isolated candidate path traps $S = (v_1, f_1, v_2, f_2, \dots, v_{k+1}, f_{\mathrm{hub}})$ of depth $k(n) = M \log n$ constructed using ordered vertices, features $f_i \in F_{\mathrm{low}}$, and root hub $f_{\mathrm{hub}}$.
For $M > 0$ sufficiently small, $\E[X_k \mid \mathbf{w}] \to \infty$ polynomial-fast as $n \to \infty$, and every such trap $S$ satisfies $S \subset \mathcal{C}_{\max}$ a.a.s.
\end{lemma}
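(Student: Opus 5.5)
The plan is a first-moment computation over ordered trap configurations, followed by a separate argument that each trap hangs off the giant component through its hub.

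\textbf{Step 1: counting configurations.} Count the ordered tuples $(v_1,\dots,v_{k+1})$ of distinct vertices, $(f_1,\dots,f_k)$ of distinct features in $F_{\mathrm{low}}$, and one hub $f_{\mathrm{hub}}\in F_{\mathrm{hub}}$. On $\Omega_{\mathbf{w}}$ the number of such tuples is at least
\[
(n/2)^{k+1}\,(a n)^{k}\, b\, n^{1-\beta\zeta},
\]
with $a,b>0$, using $|F_{\mathrm{low}}|=\Theta(n)$ and $|F_{\mathrm{hub}}|=\Theta(n^{1-\beta\zeta})$ from $\mathcal{E}_{\mathrm{counts}}$.

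\textbf{Step 2: probability of one configuration.} The trap event asks for:
\begin{enumerate}
\item $I_{v_i f_i}=I_{v_{i+1} f_i}=1$ for each $i$;
\item $I_{v_{k+1} f_{\mathrm{hub}}}=1$;
\item $v_i$ selects no feature of $F_0=\{f_1,\dots,f_k,f_{\mathrm{hub}}\}$ other than the prescribed ones;
\item each $v_i$ selects nothing in $F_m\setminus F_0$;
\item no other vertex selects any $f_i$.
\end{enumerate}
These are the isolation requirements that make $S$ a pendant path.

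Since $f_i\in F_{\mathrm{low}}$, each $p_{vf_i}\ge 1/(n\sqrt c)$, so item 1 costs at least $(n\sqrt c)^{-2k}$. Item 2 costs $p=\min(1,n^{\zeta-1}/\sqrt c)$. Item 3 costs $1-o(1)$. Item 4 contributes $C_{\mathrm{iso}}^{k+1}$ by Lemma~\ref{lem:quenched_isolation}, and that lemma's independence remark lets this factor multiply the others. Item 5 is a product over $n-k-1$ vertices of $(1-p_{uf_i})$, which gives $\exp(-(1+\eta)/\sqrt c)^{k}(1+o(1))$.

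\textbf{Step 3: the first moment.} Multiplying the count by the probability, the $n$-powers cancel:
\[
n^{2k+1}\cdot n^{-2k}\cdot n^{1-\beta\zeta}\cdot n^{\zeta-1}=n^{1-(\beta-1)\zeta}.
\]
The remaining factor is $\rho^k$ with $\rho=(a/(4c))\,C_{\mathrm{iso}}\,e^{-(1+\eta)/\sqrt c}\in(0,1)$, and $\rho^k = n^{-M|\log\rho|}$. Because $\zeta<(3-\beta)/2<1/(\beta-1)$, the exponent $1-(\beta-1)\zeta$ is positive. Choosing $M<(1-(\beta-1)\zeta)/(2|\log\rho|)$ therefore gives $\E[X_k\mid\mathbf w]\ge n^{\delta}$ for some $\delta>0$. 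The same scheme also gives an upper bound of the same order, which will be needed if one later wants a second-moment argument.

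\textbf{Step 4: attachment to $\mathcal{C}_{\max}$.} The hub has $\sum_i p_{i f_{\mathrm{hub}}}\ge n^{\zeta}/\sqrt c$ expected members, and by Chernoff it has $\Theta(n^\zeta)$ members whp. A union bound over the at most $n^{1-\beta\zeta}$ hubs extends this to all hubs simultaneously. These members form a clique. Each member lies in $\mathcal{C}_{\max}$ with probability about $\gamma_0$ (Proposition~\ref{prop:giant_component}), with only weak dependence because the hub membership uses a single feature. The probability that no member joins the giant is then $\exp(-\Omega(n^\zeta))$, and a union bound over hubs shows that every hub clique meets $\mathcal{C}_{\max}$. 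Since $v_{k+1}$ belongs to that clique and the path is connected, $S\subset\mathcal{C}_{\max}$.

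\textbf{Main obstacle.} The hard step is making Step 4 rigorous. The event that the hub meets the giant is not independent of the exploration, and one must show that a hub's members, after removing the $O(n^\zeta)$ vertices tied to that single feature, still reach the giant. I would first try a sprinkling/exploration argument: reveal all features except $f_{\mathrm{hub}}$, use the fact that a linear-size component survives removal of that feature, and then reveal $f_{\mathrm{hub}}$'s memberships independently.

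A secondary difficulty is the bookkeeping of exactly which selections are forbidden, so that the conditional-independence claim in Lemma~\ref{lem:quenched_isolation} applies cleanly.
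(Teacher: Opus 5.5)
Your Steps 1--3 are the paper's first-moment argument: the same ordered count $n^{k+1}|F_{\mathrm{low}}|^k|F_{\mathrm{hub}}|$, the same factorisation into independent indicator probabilities, the same cancellation to $n^{1-(\beta-1)\zeta}C_1^k$, and the same choice of small $M$. Your bookkeeping is slightly cleaner than the paper's. Putting $f_{\mathrm{hub}}$ into $F_0$ stops the root's isolation factor from excluding the hub selection it is multiplied against, and you actually check $1-(\beta-1)\zeta>0$, which the paper only asserts. Note that the paper hangs the hub off $v_1$ and later uses $v_{k+1}$ as the deepest leaf, so swap your labels.

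The gap is in Step 4. The paper only asserts in one sentence that the $\Theta(n^\zeta)$-clique meets $\mathcal{C}_{\max}$, and your heuristic for this is wrong as stated. The hub members form a clique, so the events $\{u\in\mathcal{C}_{\max}\}$ for $u$ in the hub are one and the same event. They are not weakly dependent events of probability about $\gamma_0$. The bound $e^{-\Omega(n^\zeta)}$ can only come from connections through other features, as your obstacle paragraph recognises.

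A naive hub-by-hub sprinkling also needs a per-hub failure probability of $o(n^{\beta\zeta-1})$ for survival of the giant, which an a.a.s.\ giant theorem does not give. The clean version deletes all of $F_{\mathrm{hub}}$ at once:
\begin{enumerate}
\item Truncating the weights at $n^\zeta$ changes $\E[w^2]$ only by $o(1)$. So the hub-free graph is still supercritical and a.a.s.\ has a component $\mathcal{C}'$ with $|\mathcal{C}'|\ge\gamma n$.
\item Given $\mathbf{w}$, hub memberships are independent of $\mathcal{C}'$. So each hub misses $\mathcal{C}'$ with probability at most $(1-n^{\zeta-1}/\sqrt c)^{\gamma n}=e^{-\Omega(n^\zeta)}$.
\item A union bound over the $O(n^{1-\beta\zeta})$ hubs shows that every hub clique meets $\mathcal{C}'$. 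No Chernoff bound on hub sizes is needed.
\item Adding features only merges components, so $\mathcal{C}'$ lies in a linear-size component of $G$. By the uniqueness in Proposition~\ref{prop:giant_component}, that component is $\mathcal{C}_{\max}$.
\end{enumerate}
This event is unconditional, so it covers every trap simultaneously.
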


\begin{proof}
The heavy feature set $F_{\mathrm{hub}}$ has size $|F_{\mathrm{hub}}| = \Theta(m (n^\zeta)^{-\beta}) = \Theta(n^{1 - \beta \zeta})$. The probability that the root vertex $v_1$ selects a specific heavy feature $f_{\mathrm{hub}}$ is $p_{\mathrm{hub}, 1} = \Theta(n^{\zeta-1})$. The hub feature induces a clique of expected size $\Theta(n^\zeta) \to \infty$, which intersects $\mathcal{C}_{\max}$ a.a.s.

The low-weight feature set $F_{\mathrm{low}}$ has size $|F_{\mathrm{low}}| = \rho_0 m = \Theta(n)$, where $\rho_0 = 1 - (1+\eta)^{-\beta}$. For any feature $f \in F_{\mathrm{low}}$, the attachment probability satisfies $p_{vf} \in [\frac{1}{n\sqrt{c}}, \frac{1+\eta}{n\sqrt{c}}]$. Let $p_0 = \frac{1+\eta}{n\sqrt{c}}$ be the upper bound for simplicity.
A valid trap sequence requires:
1. Vertices $\{v_1, \dots, v_{k+1}\}$ and features $\{f_1, \dots, f_k\} \subset F_{\mathrm{low}}$, plus $f_{\mathrm{hub}} \in F_{\mathrm{hub}}$.
2. Feature $f_i$ connects exactly $v_i$ and $v_{i+1}$, and no other vertices.
3. The $k+1$ vertices select no other unassigned features (isolation).
4. Root $v_1$ connects to $f_{\mathrm{hub}}$.

The number of ordered sequence choices is:
\begin{equation} \label{eq:n_k}
    N_k = \frac{n!}{(n - k - 1)!} \cdot \frac{|F_{\mathrm{low}}|!}{(|F_{\mathrm{low}}| - k)!} \cdot |F_{\mathrm{hub}}| = n^{k+1} |F_{\mathrm{low}}|^k |F_{\mathrm{hub}}| (1-o(1)).
\end{equation}
The probability that a specific sequence successfully forms the isolated path is bounded below by the strictly independent product of the path formation edges and the vertex isolation conditions (justified by Lemma \ref{lem:quenched_isolation}):
\begin{equation}
    \p(I_A = 1 \mid \mathbf{w}) = \left( p_{min}^2 (1-p_0)^{n-2} \right)^k p_{\mathrm{hub}, 1} (C_{\mathrm{iso}})^{k+1}.
\end{equation}
Letting $C' = (1-p_0)^{n-2} C_{\mathrm{iso}} = \Theta(1)$, the expected count is:
\begin{align} \label{eq:expectation_Xk_corrected}
    \E[X_k \mid \mathbf{w}] &\ge N_k \p(I_A = 1 \mid \mathbf{w}) \notag \\
    &= \Theta\left( n^{k+1} (\rho_0 c n)^k n^{1-\beta\zeta} \left( \frac{C'}{c n^2} \right)^k n^{\zeta-1} \right) = \Theta\left( n^{1 + (1-\beta)\zeta} C_1^k \right),
\end{align}
where $C_1 = \rho_0 C' = \Theta(1)$ is a strictly positive constant.
Taking $k(n) = M \log n$, we have $C_1^k = \exp(M \log C_1 \log n) = n^{M \log C_1}$.
Thus, $\E[X_k \mid \mathbf{w}] = \Theta\left( n^{1 + (1-\beta)\zeta + M \log C_1} \right)$.
Because $1 + (1-\beta)\zeta > 0$, we can choose $M > 0$ sufficiently small such that $1 + (1-\beta)\zeta + M \log C_1 > 0$, guaranteeing that $\E[X_k \mid \mathbf{w}] \to \infty$ polynomial-fast as $n \to \infty$.
\end{proof}

\begin{lemma}[Janson's Inequality Dependency Sum]\label{lem:janson}
Condition on $\mathbf{w} \in \Omega_{\mathbf{w}}$. The dependency sum $\Delta = \sum_{A \sim B} \p(I_A = 1 \wedge I_B = 1 \mid \mathbf{w})$ for intersecting trap candidates $A$ and $B$ satisfies $\Delta = o\left(\E[X_k \mid \mathbf{w}]^2\right)$. Thus, $\p(X_k = 0 \mid \mathbf{w}) \to 0$.
\end{lemma}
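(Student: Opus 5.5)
We will apply Janson's inequality in its standard form for sums of indicators of "increasing-type" events. One caveat first: the trap events are not monotone, because they contain isolation constraints. So we will use the second-moment (Chebyshev) version instead, namely
\[
\p(X_k=0\mid\mathbf{w})\le \frac{\var(X_k\mid\mathbf{w})}{\E[X_k\mid\mathbf{w}]^2}\le \frac{1}{\E[X_k\mid\mathbf{w}]}+\frac{\Delta}{\E[X_k\mid\mathbf{w}]^2}+\frac{\sum_{A\not\sim B}\bigl(\p(I_AI_B)-\p(I_A)\p(I_B)\bigr)}{\E[X_k\mid\mathbf{w}]^2}.
\]
By Lemma \ref{lem:expected_traps}, the first term is $o(1)$. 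For disjoint candidates $A,B$ (no common vertex or feature), the isolation constraints only make $\p(I_AI_B)\le \p(I_A)\p(I_B)(1+o(1))$, since each vertex of $A$ must avoid the features of $B$ and vice versa. The third term is therefore $o(1)$ as well, and everything reduces to showing $\Delta=o(\E[X_k\mid\mathbf{w}]^2)$.

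To bound $\Delta$, classify overlapping pairs $A\sim B$ by their intersection pattern. Let $A$ have vertex set $V_A$ and feature set $F_A$, and let $j=|V_A\cap V_B|\ge 1$ (or the pair shares a feature) and $\ell=|F_A\cap F_B|$. The key structural observation is that the traps are rigid. Each low feature $f_i$ is selected by exactly its two endpoints, and each trap vertex selects only its own trap features (plus the hub, for the root). Consequently, if a vertex $v$ lies in both $A$ and $B$, its feature set is the same in both traps, so the neighbours of $v$ in the trap graph coincide. Propagating along the path shows that two compatible overlapping traps must share a contiguous segment, and in fact essentially coincide up to the choice of root and hub. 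The only exception is the case where the shared vertex is the root, with the two traps using different hubs. In that case $v_1$ selects two hubs, which costs an extra factor $n^{\zeta-1}$ and saves a factor $|F_{\mathrm{hub}}|$.

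We then carry out the counting. A pair sharing an arc of $j$ vertices and $\ell\ge j-1$ features, with otherwise consistent structure, contributes at most
\[
\E[X_k]^2\cdot \frac{1}{n^{j}|F_{\mathrm{low}}|^{\ell}}\cdot\bigl(n^2 p_0^2\bigr)^{-\ell}C_{\mathrm{iso}}^{-j}\cdot k^{O(1)},
\]
since the shared elements are chosen only once while their edge and isolation probabilities are also paid only once. Using $|F_{\mathrm{low}}|\,n^2p_{\min}^2=\Theta(n)$, each shared feature together with its vertex yields a net factor of order $C^{\,}/n$. Summing over $j\le k+1$ with $k=M\log n$, the $k^{O(1)}$ placement factors and $C^{k}=n^{O(M)}$ are absorbed for $M$ small. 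This gives $\Delta/\E[X_k]^2=O(n^{-1+O(M)}) + O(1/\E[X_k])=o(1)$. The terms where $A$ and $B$ share the hub but no path element are disjoint in vertices and low features. They differ from independence only through the two roots both selecting $f_{\mathrm{hub}}$, which is an independent product, so they contribute $\le \E[X_k]^2/|F_{\mathrm{hub}}|=o(\E[X_k]^2)$.

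The main obstacle is the case analysis of overlap patterns. In particular, we must justify that inconsistent overlaps have zero probability. Examples are a shared vertex occupying different positions, or a shared feature joining different vertex pairs, either of which violates the "exactly two selectors" or "no other features" constraints. This rigidity is what prevents a combinatorial explosion of $k!$-type overlap configurations, which would otherwise swamp the polynomial gain. We will first establish this rigidity lemma, and then do the geometric sum over the shared-segment length.
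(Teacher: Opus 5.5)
Your argument is correct, and it takes a genuinely different (and sounder) route than the paper.

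\textbf{The paper's route.} It invokes Janson's inequality directly. It bounds $\Delta$ by a crude count over overlap sizes $(s,f)$ with $s\ge f+1$, using $\binom{k}{f}\binom{k+1}{s}$ placements to get $\sum_{B\sim A}\p(I_B=1\mid I_A=1)\le 2^{2k+1}$, hence $\Delta\le 2^{2k+1}\E[X_k\mid\mathbf{w}]$. This forces the extra condition $1+(1-\beta)\zeta+M\log C_1>2M\log 2$.

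\textbf{Your route.} Replacing Janson by Chebyshev is the right call, for two reasons.
\begin{enumerate}
\item The trap events contain non-selection constraints, so they are not increasing.
\item Candidates sharing no vertex or low feature are still not independent. Both events constrain the selection indicators $I_{vg}$ with $v$ a vertex of one trap and $g$ a low feature of the other, so in Janson's framework every pair would be adjacent.
\end{enumerate}
Your third term handles exactly the second point. In fact $\p(I_A=1,I_B=1)/(\p(I_A=1)\p(I_B=1))=\prod(1-p_{vg})^{-1}$ over these $2k(k+1)$ shared indicators, which is $1+\mathcal{O}(k^2/n)$. Your rigidity lemma also disposes of the positional ($k!$-type) placements of shared elements that the paper's binomial count silently ignores.

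\textbf{What each buys.} You get only a polynomial rather than exponential bound on $\p(X_k=0\mid\mathbf{w})$, but that is all the statement needs. In exchange the argument actually holds, and the only constraint on $M$ is $\E[X_k\mid\mathbf{w}]\to\infty$.

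\textbf{Two bookkeeping remarks.}
\begin{enumerate}
\item Rigidity gives more than you use. By the exactness and isolation conditions, every trap vertex has a fully prescribed feature set, and $v_1$ is the only vertex of $A$ selecting a heavy feature. So sharing a single vertex or low feature forces $B=A$, orientation and hub included. Your ``different hubs'' exception is already excluded by the isolation of $v_1$. Distinct intersecting pairs therefore have zero joint probability, and the shared-segment sum can be dropped.
\item That sum is miscounted as written. A shared low feature saves a two-selector probability of order $p_{\min}^2$ while losing a factor $|F_{\mathrm{low}}|$ of choices. So:
\begin{itemize}
\item the correction is $p_{\min}^{-2\ell}$ (up to $\Theta(1)^\ell$), not $(n^2p_0^2)^{-\ell}$;
\item a shared feature with its vertex is net $\Theta(1)$, not $C/n$, and the one surplus vertex of the arc supplies the single $1/n$;
\item a shared hub adds a further factor $(|F_{\mathrm{hub}}|\,p_{\mathrm{hub},1})^{-1}=n^{(\beta-1)\zeta}$.
\end{itemize}
The resulting bound $n^{-1+(\beta-1)\zeta+\mathcal{O}(M)}$ is still $o(1)$, so nothing breaks, but the step is superfluous once rigidity is established.
\end{enumerate}
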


\begin{proof}
Let $A, B \in \mathcal{I}_k$ be two distinct trap candidate sequences sharing $s$ vertices and $f$ features from $F_{\mathrm{low}}$. Because $A$ and $B$ are simple paths, their intersection is a forest. Thus, $s \ge f + 1$.
Candidate $B$ contains $k+1-s$ unshared vertices and $k-f$ unshared $F_{\mathrm{low}}$ features. The conditional probability of forming the unshared edges is:
\begin{equation} \label{eq:cond_prob_B}
    \p(I_B = 1 \mid I_A = 1, \mathbf{w}) \le \mathcal{O}\left( \left(\frac{1}{n^2}\right)^{k - f} \right) \cdot \p(\text{hub selection}).
\end{equation}
If $B$ shares the exact same root vertex and heavy hub feature as $A$, then $\p(\text{hub selection}) = 1$, averting the $n^{\zeta-1}$ penalty. The number of such candidate extensions is bounded by $\binom{k}{f} n^{k-f}$ choices for features and $\binom{k+1}{s} n^{k+1-s}$ choices for vertices. Because $A$ and $B$ are simple paths, the intersection size obeys $s \ge f+1$ as any shared geometry must form a forest of subpaths. We can safely upper-bound the sums. Summing over $f \in \{0, \dots, k\}$ and bounding $s \ge f+1$:
\begin{align} \label{eq:sum_B_fixed_A}
    \sum_{B \sim A} \p(I_B = 1 \mid I_A = 1) 
    &\le \sum_{f=0}^k \sum_{s=f+1}^{k+1} \binom{k}{f} \binom{k+1}{s} n^{k-f} n^{k+1-s} \left(\frac{1}{n^2}\right)^{k - f} \notag \\
    &\le \sum_{f=0}^k \binom{k}{f} \sum_{s=0}^{k+1} \binom{k+1}{s} n^{1-s+f} \notag \\
    &\le \sum_{f=0}^k \binom{k}{f} \sum_{s=0}^{k+1} \binom{k+1}{s} n^0 = 2^k \cdot 2^{k+1} = 2^{2k+1} = n^{2M \log 2}.
\end{align}
Conversely, overlap classes that do not share the heavy root hub require an independent selection of a separate heavy feature, immediately picking up an additional suppressive factor of $p_{\mathrm{hub},1} = \Theta(n^{\zeta-1})$. Because $\zeta < 1/2$, these independent hub selections are strictly lower order.
Summing over candidate $A$, the contribution to $\Delta$ from the dominant overlap class is at most $\E[X_k \mid \mathbf{w}] \cdot 2^{2k+1}$.
To ensure $\Delta = o(\E[X_k \mid \mathbf{w}]^2)$, we require $\E[X_k \mid \mathbf{w}] \gg 2^{2k+1}$.
Since $\E[X_k \mid \mathbf{w}] = \Theta(n^{1 + (1-\beta)\zeta + M \log C_1})$, we enforce the strict algebraic feasibility condition:
\begin{equation}
    1 + (1-\beta)\zeta + M \log C_1 > 2 M \log 2.
\end{equation}
This establishes a genuine non-empty feasibility region for the constants. For example, selecting $\beta = 2.5$, $\zeta = 0.2$, and a worst-case isolation constant $\log C_1 = -2.0$, the condition becomes $0.7 > 3.386 M$. Choosing the depth multiplier $M = 0.1$ satisfies this inequality cleanly ($0.7 > 0.3386$), guaranteeing that expected traps vastly outgrow overlap dependencies.
By choosing $M > 0$ sufficiently small, this strict inequality holds, yielding $\Delta = o(\E[X_k \mid \mathbf{w}]^2)$.
Applying Janson's Inequality, $\p(X_k = 0 \mid \mathbf{w}) \le \exp\left( - \frac{\E[X_k]^2}{2\Delta} \right) \to 0$.
\end{proof}

\section{1D Exit Limits and Proof of Main Theorems}
\label{sec:exit_limits}

In this section, we derive the escape time limit starting from the deepest leaf node $x = v_{k+1} \in S$. 

\begin{lemma}[First Hitting Time vs. Total Escape Time]\label{lem:hitting_vs_escape}
Let $v_1$ be the root vertex connecting $S$ to $\mathcal{C}_{\max} \setminus S$, and let $v_{k+1}$ be the deepest leaf node. Let $T_{v_1} = \inf\{t \ge 0 : X_t = v_1\}$ be the first hitting time of $v_1$ starting from $v_{k+1}$. Let $T_{\mathrm{esc}} = \inf\{t \ge 0 : X_t \notin S\}$ be the total escape time from $S$ into $\mathcal{C}_{\max} \setminus S$. Then:
\begin{equation} \label{eq:escape_decomposition}
    T_{\mathrm{esc}} = T_{v_1} + \mathcal{O}_{\mathbb{P}}(k(n)).
\end{equation}
Consequently, as $n \to \infty$, the normalized exit time satisfies:
\begin{equation} \label{eq:slutsky_limit}
    \frac{T_{\mathrm{esc}}}{k(n)^2} \xrightarrow{d} \tau_{\mathrm{BM}},
\end{equation}
where $\tau_{\mathrm{BM}} = \inf\{ t > 0 : |B_{t/2}| = 0 \}$ is the continuous first hitting time of $0$ for a standard Brownian motion on $[0,1]$ initialized at $1$ with reflection at $1$.
\end{lemma}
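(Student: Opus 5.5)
The plan is to exploit the fact that, on the event of Lemma~\ref{lem:janson}, the trap $S$ has the explicit graph structure forced by the construction in Lemma~\ref{lem:expected_traps}. Each low feature $f_i$ is selected by exactly $v_i$ and $v_{i+1}$, and the vertices select no other features apart from $v_1$'s hub $f_{\mathrm{hub}}$. Hence the induced graph on $S$ is the path $v_1 - v_2 - \cdots - v_{k+1}$, with $d_{v_{k+1}} = 1$ and $d_{v_i} = 2$ for $2 \le i \le k$. The vertex $v_1$ has degree $1 + (h-1)$, where $h \ge 2$ is the size of the hub clique. The argument has two pieces: a path computation giving the scaling limit of $T_{v_1}$, and a separate bound on the extra time $T_{\mathrm{esc}} - T_{v_1}$.

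\emph{Step 1 (diffusive limit of $T_{v_1}$).} Before time $T_{v_1}$, the process $D_t = k+1-i$ (where $X_t = v_{k+1-D_t}$, i.e.\ $D_t$ is the distance from $v_1$) behaves as follows.
\begin{itemize}
\item At interior sites it is a lazy simple random walk: it holds with probability $1/2$ and moves $\pm 1$ with probability $1/4$ each.
\item At the leaf $D = k$ it holds with probability $1/2$ and steps inward with probability $1/2$.
\end{itemize}
So $D$ is a lazy walk on $\{0,\dots,k\}$ reflected at $k$, started at $k$, with step variance $1/2$. Apply Donsker's invariance principle for reflected walks to $D_{\lfloor k^2 t\rfloor}/k$. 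Equivalently, apply the standard Donsker theorem to the unreflected walk and use the folding map $x \mapsto 1 - |1 - x|$ around the level $1$, which is continuous on path space. This gives convergence in $D[0,\infty)$ to $1 - |B_{t/2}|$, i.e.\ Brownian motion on $[0,1]$ run at speed $1/2$, started at $1$ and reflected at $1$. The hitting functional $\omega \mapsto \inf\{t : \omega(t) = 0\}$ is continuous at almost every Brownian path, because Brownian motion crosses any level immediately after hitting it. The continuous mapping theorem then yields $T_{v_1}/k^2 \xrightarrow{d} \tau_{\mathrm{BM}}$. The usual $\lfloor\cdot\rfloor$ discretization and the possibility of holding at $0$ do not matter at this scale.

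\emph{Step 2 (the overshoot $T_{\mathrm{esc}} - T_{v_1}$).} After $T_{v_1}$, each time the walk is at $v_1$ it makes one of three moves:
\begin{itemize}
\item it steps into $\mathcal{C}_{\max}\setminus S$ (the hub clique) with probability $\frac12\cdot\frac{h-1}{h} \ge \frac14$;
\item it holds;
\item it steps to $v_2$ with probability $\frac{1}{2h} \le \frac14$.
\end{itemize}
So the number $N$ of excursions into the path before escape is dominated by a geometric variable with success probability bounded below by an absolute constant, and $\E N = O(1)$.

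Each excursion starts at $v_2$ and returns to $v_1$. For the lazy walk reflected at distance $k$, the expected return time from distance $1$ to $0$ is $2(2k-1) = O(k)$, by the standard gambler's-ruin/commute-time computation on a path. Holding times at $v_1$ are geometric with mean $O(1)$. By Wald's identity, $\E[T_{\mathrm{esc}} - T_{v_1}] = O(k)$ uniformly in $n$, so Markov's inequality gives $T_{\mathrm{esc}} - T_{v_1} = \mathcal{O}_{\mathbb{P}}(k(n))$. This proves \eqref{eq:escape_decomposition}.

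\emph{Step 3 (conclusion).} By Step 2, $(T_{\mathrm{esc}} - T_{v_1})/k^2 = \mathcal{O}_{\mathbb{P}}(1/k) \to 0$ in probability. Slutsky's lemma combined with Step 1 gives \eqref{eq:slutsky_limit}.

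The main obstacle is Step 1. The continuity of the hitting functional is where care is needed: one must check that the limiting reflected process a.s.\ crosses $0$ immediately upon hitting it, so that the discrete hitting times cannot lag the continuous one. One must also confirm that the trap is exactly a path, which requires that the isolation events of Lemma~\ref{lem:quenched_isolation} genuinely rule out extra edges from the $v_i$. I would handle the first point by a separate argument: lower-bound the hitting time via hitting $[0,\delta]$, upper-bound it via strong Markov plus a uniform estimate that from distance $\delta k$ the walk reaches $0$ within $\varepsilon k^2$ steps with probability $1-o_\delta(1)$, and let $\delta \to 0$.
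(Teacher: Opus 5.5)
Your proposal is correct and follows the same route as the paper: Donsker's principle plus the continuous mapping theorem for $T_{v_1}/k(n)^2$, a geometrically distributed number of $O(k(n))$-length excursions from $v_2$ to control $T_{\mathrm{esc}}-T_{v_1}$, and Slutsky's lemma. Your version is more careful than the paper's in several places: the exact folding representation, the a.s.\ continuity of the hitting functional, the correct lazy excursion mean $2(2k-1)$ rather than $2k$, and an escape probability $\ge 1/4$ that needs only $h\ge 2$ rather than $d_{\mathrm{core}}\to\infty$. One indexing slip in Step 1 should be fixed: when $X_t=v_i$, the distance from $v_1$ is $D_t=i-1$, not $k+1-i$.
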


\begin{proof}
Let $p_e = \frac{d_{\mathrm{core}}}{2(1 + d_{\mathrm{core}})}$ be the exact probability that the lazy random walk at $v_1$ steps directly into $\mathcal{C}_{\max} \setminus S$. Because $d_{\mathrm{core}} \to \infty$ a.a.s., we have $p_e \to 1/2$.

Upon hitting $v_1$ at time $T_{v_1}$, the walk attempts to exit. The number $N$ of failed exit attempts before successfully stepping into $\mathcal{C}_{\max} \setminus S$ is a $\mathrm{Geometric}(p_e)$ random variable, with expected value $\E[N] = (1-p_e)/p_e = \Theta(1)$.
Each failed attempt forces the walk into $v_2$, incurring a local excursion on the 1D path $S$ before returning to $v_1$. For a simple random walk on a path of length $k(n)$ with a reflecting boundary at $v_{k+1}$, the expected return time to the root from its neighbor is $\E_{v_2}[T_{v_1}] = 2k(n) = \Theta(k(n))$.
Therefore, the total time spent in post-hitting excursions $\sum_{i=1}^N \xi_i$ has a finite expected value $\mathbb{E}\left[\sum_{i=1}^N \xi_i\right] = \Theta(k(n))$, establishing that $T_{\mathrm{esc}} - T_{v_1} = \mathcal{O}_{\mathbb{P}}(k(n))$.

Dividing by $k(n)^2$, we obtain:
\begin{equation} \label{eq:scaling_slutsky}
    \frac{T_{\mathrm{esc}}}{k(n)^2} = \frac{T_{v_1}}{k(n)^2} + \frac{\mathcal{O}_{\mathbb{P}}(k(n))}{k(n)^2} = \frac{T_{v_1}}{k(n)^2} + o_{\mathbb{P}}(1).
\end{equation}

By Donsker's Invariance Principle (Billingsley \cite{billingsley1999convergence}), the lazy simple random walk trajectory $\frac{X_{\lfloor t k^2 \rfloor}}{k}$ on $\{1, \dots, k+1\}$ with reflection at $k+1$ converges weakly in $D([0,\infty))$ to a reflected Brownian motion $B_{t/2}$ on $[0,1]$ initialized at $1$.
By the Continuous Mapping Theorem, $\frac{T_{v_1}}{k(n)^2} \xrightarrow{d} \tau_{\mathrm{BM}}$.
Applying Slutsky's Theorem to \eqref{eq:scaling_slutsky}, the additive $o_{\mathbb{P}}(1)$ perturbation vanishes in distribution, yielding:
\begin{equation}
    \frac{T_{\mathrm{esc}}}{k(n)^2} \xrightarrow{d} \tau_{\mathrm{BM}}.
\end{equation}
The limit distribution $F_\tau(t) = \p(\tau_{\mathrm{BM}} \le t)$ is continuous, strictly increasing, and supported on $(0, \infty)$ with non-zero variance $\var(\tau_{\mathrm{BM}}) = \Theta(\E[\tau_{\mathrm{BM}}]^2)$.
\end{proof}

\begin{proof}[Proof of Theorem \ref{thm:lower_bound} and Corollary \ref{cor:local_non_concentration}]
Let $A_n = S \subset \mathcal{C}_{\max}$ be the peripheral trap of depth $k(n)$. Because the trap includes the root vertex $v_1$ attached to the heavy hub feature, the volume is $\vol(A_n) = \Theta(n^\zeta)$. The stationary measure is therefore $\pi_n(A_n) = \frac{\vol(A_n)}{2|E(\mathcal{C}_{\max})|} = \Theta(n^{\zeta - 1})$. Since $\zeta < 1/2$, this is strictly $o(1)$.

Starting from the deepest leaf $x = v_{k+1} \in A_n$, total variation distance satisfies:
\begin{equation} \label{eq:tv_lower_bound}
    d_x(t) \ge P^t(x, A_n) - \pi_n(A_n) = \p_x(T_{\mathrm{esc}} > t) - o(1).
\end{equation}
Setting $t = \lambda \cdot 2 k(n)^2$, Lemma \ref{lem:hitting_vs_escape} gives:
\begin{equation} \label{eq:tv_limit}
    \lim_{n \to \infty} d_x(\lambda \cdot 2 k(n)^2) \ge \p(\tau_{\mathrm{BM}} > 2\lambda).
\end{equation}
Because $\tau_{\mathrm{BM}}$ has a continuous density on $(0, \infty)$, $g(\lambda) = \p(\tau_{\mathrm{BM}} > 2\lambda)$ decays smoothly from $1$ to $0$ as $\lambda$ increases.

To decrease local total variation distance $d_x(t)$ from $1-\eps$ to $\eps$, $\lambda$ must advance from $\lambda_{1-\eps}$ to $\lambda_\eps$, requiring physical time:
\begin{equation} \label{eq:cutoff_window}
    t_x(\eps) - t_x(1-\eps) \ge 2(\lambda_\eps - \lambda_{1-\eps}) k(n)^2 = C_\eps \cdot k(n)^2,
\end{equation}
where $C_\eps > 0$ is a universal constant. This establishes Corollary \ref{cor:local_non_concentration}.

Furthermore, since $\tmix(1/4) = \max_{y \in \mathcal{C}_{\max}} \tmix^y(1/4) \ge t_x(1/4)$, equation \eqref{eq:cutoff_window} directly proves Theorem \ref{thm:lower_bound}:
\begin{equation}
    \tmix(1/4) = \Omega\left(k(n)^2\right) = \Omega\left(\log^2 n\right).
\end{equation}
\end{proof}

\end{document}